\def\ordp{{\rm ord}_p}
\def\cD{{\mathcal D}}
\def\cP{{\mathcal P}}
\def\cU{{\mathcal U}}
\def\cL{{\mathcal L}}
\newtheorem{theorem}[subsection]{Theorem}
\newtheorem{cor}[subsection]{Corollary}
\newtheorem{prop}[subsection]{Proposition}
\theoremstyle{definition}
\theoremstyle{plain}
\numberwithin{equation}{subsection}
\def\boxit#1#2{\setbox1=\hbox{\kern#1{#2}\kern#1}%
\dimen1=\ht1 \advance\dimen1 by #1 \dimen2=\dp1 \advance\dimen2 by
#1
\setbox1=\hbox{\vrule height\dimen1 depth\dimen2\box1\vrule}%
\setbox1=\vbox{\hrule\box1\hrule}%
\advance\dimen1 by .4pt \ht1=\dimen1 \advance\dimen2 by .4pt
\dp1=\dimen2 \box1\relax}
\newcommand{\abs}[1]{\lvert#1\rvert}
\begin{document}
 \title{A Note on complex $p$-adic exponential fields}

\author{Ali Bleybel}

\address{Lebanese University,
 Faculty of Sciences,
 Beirut, Lebanon}

\email{bleybel@ul.edu.lb}

\begin{abstract}
In this paper we apply Ax-Schanuel's Theorem to the ultraproduct of $p$-adic fields in order to get some results towards algebraic independence of $p$-adic exponentials for almost all primes $p$. 
\end{abstract}

\maketitle

\section{Introduction}
 Let $\mathbb{Q}_p$ be the field of $p$-adic numbers, for $p$ a prime number.  
   Given an algebraic closure $\mathbb{Q}_p^{\rm alg}$ of $\mathbb{Q}_p$, it comes naturally equipped with a norm $\abs{\cdot}_p$, uniquely extending the usual norm on $\mathbb{Q}_p$. Recall that the standard normalization for $\abs{\cdot}_p$ is $\abs{p}_p=p^{-1}$.  \\
   Denote by $\mathbb{C}_p$ the completion of $\mathbb{Q}_p^{\rm alg}$ with respect to the norm $\abs{\cdot}_p$. Then $\mathbb{C}_p$ is also algebraically closed. It is called a complex $p$-adic field.

 The $p$-adic exponential map
$$\exp_p: E_p  \to \mathbb{C}_p^{\times}, x \mapsto \sum_{n=0}^\infty \frac{x^n}{n!},$$
where $E_p$ is the set $E_p= \{ x \in \mathbb{C}_p: |x|_p < p^{-\frac{1}{p-1}} \}$ (the domain of convergence of the defining  power series of the exponential) shares several properties with the complex exponential map $\exp$ (such as $\exp_p(x+y) = \exp_p (x) \exp_p(y)$, $(\exp_p(x))'=\exp_p(x)$ where $()'$ denotes the usual derivative).

There are important open problems regarding the exponential map over a non-archimedean valued field. One of these concerns the algebraic independence of the values of the exponential map at different arguments.\\
 Such issues are encapsulated in the following well-known conjecture ($p$-adic Schanuel's conjecture) \\
{\bf ($p$-SC)} Let $\bar{x} := (x_1,\dots, x_n) \in \mathbb{C}_p^n$ be an $n$-tuple of complex $p$-adic numbers satisfying the requirement $$\abs{\bar{x}}_p  := \max_{1 \leq i \leq n}\{\abs{x_i}_p \} < p^{-1/p-1}. $$
Assume that $x_1, \dots, x_n$ are $\mathbb{Q}$-linearly independent, then 
$$ \textrm{td}_{\mathbb{Q}}(x_1, \dots, x_n, \exp_p(x_1), \dots, \exp_p(x_n)) \geq n, $$
where td$_{\mathbb{Q}}$ denotes the transcendence degree of the extension $$\mathbb{Q}(\bar{x}, \exp_p(\bar{x}))/\mathbb{Q}. $$  

In the following we will denote by $\mathbb{G}$ the algebraic group $\mathbb{G}_a \times \mathbb{G}_m$, with $\mathbb{G}_a$ denoting the additive group of a field (say $\mathbb{C}_p$) and $\mathbb{G}_m$ its multiplicative group. \\
In the above statement we used the abbreviation $f(\bar{x}):= (f(x_1), \dots, f(x_n))$ for any $n$-tuple $\bar{x}$.  \\
An equivalent statement to ($p$-SC) is the following: \\
{\bf ($p$-SC)'}  Let $\bar{x} := (x_1,\dots, x_n) \in \mathbb{C}_p^n$ be an $n$-tuple of complex $p$-adic numbers satisfying $\abs{\bar{x}}_p < p^{-1/p-1}$. 
Assume that
$$ (\bar{x}, \exp_p(\bar{x})) \in V(\mathbb{C}_p), $$
 for some subvariety $V$ of $\mathbb{G}^n$ defined over $\mathbb{Q}$ (i.e. a $\mathbb{Q}$-variety), which is furthermore of dimension $<n$. Then, $x_1, \dots, x_n$ are $\mathbb{Q}$-linearly dependent, i.e. 
 $$ m_1 x_1 + \dots + m_n x_n =0, $$
 for some $m_1, \dots, m_n \in \mathbb{Q}$, not all zero. 
\par
In this paper we apply the ultraproduct construction and basic model theory in order to obtain some results in the above direction.   \\
The main Theorem can be obtained by applying Ax-Schanuel's Theorem ~\cite{A} to a non-principal ultraproduct of $\mathbb{C}_p$, and it reads as:
\begin{theorem}  \label{main}
 Let $V$ be a $\mathbb{Q}$-variety of dimension $n$ in an affine $2n$ space. Assume that for infinitely many primes $p$, $V$ has a $\mathbb{C}_p$-point of the form $(\bar{a}_p, \exp_p(\bar{a}_p))$, then 
 there exist a finite set $S(V) \subset \mathbb{P}$, and a finite set of rational tuples $\bar{\alpha}_i, i \in I$, (where $I$ is a finite set) such that for all $p \in \mathbb{P}\setminus \!S(V)$,  and for all $n$-tuples $\bar{x}_p \in E_p^n$ satisfying $(\bar{x}_p, \exp_p(\bar{x}_p)) \in V(\mathbb{C}_p)$, there is a rational linear dependence that holds for the tuple $\bar{x}_p$ of the form 
 $$ \alpha_{i,1} x_{p,1} + \dots + \alpha_{i,n} x_{p,n} =0, $$
 for some $i \in I$.  
\end{theorem}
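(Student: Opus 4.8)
The plan is to transport the situation to a single field of characteristic zero by an ultraproduct and then feed it to Ax-Schanuel's theorem. Let $T\subseteq\mathbb P$ be the infinite set of primes for which $V$ has a point $(\bar a_p,\exp_p(\bar a_p))$ with $\bar a_p\in E_p^{\,n}$, fix a non-principal ultrafilter $\mathcal U$ on $\mathbb P$ with $T\in\mathcal U$, and set $K=\prod_{\mathcal U}\mathbb C_p$. By {\L}o\'{s}'s theorem $K$ is an algebraically closed field of characteristic $0$; it carries the internal set $E=\prod_{\mathcal U}E_p$ and the internal exponential $\mathbf e=\prod_{\mathcal U}\exp_p\colon E\to K^{\times}$, which is an injective group homomorphism, and $V(K)$ contains $(\bar a,\mathbf e(\bar a))$ with $\bar a=[\bar a_p]_{\mathcal U}\in E^{\,n}$. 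Since $V$ is a $\mathbb Q$-variety of dimension $n$, every point of $V(K)$ has transcendence degree at most $n$ over $\mathbb Q$, hence over any intermediate field.

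Next I build a differential field in which Ax-Schanuel bites. For $p\in T$ let $X_p=\{\bar x\in E_p^{\,n}\colon(\bar x,\exp_p(\bar x))\in V(\mathbb C_p)\}$, a nonempty $p$-adic analytic set of dimension at most $n$, and let $W$ be any positive-dimensional irreducible component of the Zariski closure in $\mathbb A^{2n}$ of the graph $\{(\bar x,\exp_p(\bar x)):\bar x\in X_p\}$ (if there is none, all points of $X_p$ are isolated and are treated below). The function field $\mathbb C_p(W)$, together with any nonzero rational vector field $\partial$ on $W$, is a differential field with field of constants $\mathbb C_p$; writing $u_1,\dots,u_{2n}$ for the coordinate functions, the analytic identity $\exp_p'=\exp_p$ and the chain rule, applied along the Zariski-dense analytic graph, yield $\partial u_{n+i}=u_{n+i}\,\partial u_i$ for $i=1,\dots,n$. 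Ax-Schanuel now applies with $x_i=u_i$ and $y_i=u_{n+i}$: were $u_1,\dots,u_n$ linearly independent over $\mathbb Q$ modulo $\mathbb C_p$, we would get $\mathrm{td}_{\mathbb C_p}\mathbb C_p(u_1,\dots,u_{2n})\ge n+1$, contradicting that this transcendence degree equals $\dim W\le n$. Hence the $u_i$ satisfy a nontrivial $\mathbb Q$-linear relation modulo $\mathbb C_p$, which descends to a relation of the form $\alpha_{1}x_{p,1}+\dots+\alpha_{n}x_{p,n}=0$ valid for every $\bar x_p$ on that component of $X_p$ (the passage from the affine relation $\sum m_i u_i=c$ produced by Ax-Schanuel to the homogeneous form stated in the theorem is routine, using the companion multiplicative identity $\prod_i\exp_p(x_i)^{m_i}=\exp_p(c)$ and $(\bar x,\exp_p(\bar x))\in V$).

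Two things remain, and they carry the real weight. First, the isolated points of $X_p$: here the infinitude of $T$ is essential. If for infinitely many $p$ the set $X_p$ had an isolated point whose coordinates are $\mathbb Q$-linearly independent, then assembling these points through $\mathcal U$ produces a point $(\bar c,\mathbf e(\bar c))\in V(K)$ with $\bar c$ having $\mathbb Q$-linearly independent coordinates and transcendence degree at most $n$, whereas Ax-Schanuel on $K$ --- applied with a derivation manufactured from the internal analytic structure of $\mathbf e$ --- forces transcendence degree at least $n+1$; the contradiction confines such primes to a finite set. Second, finiteness and uniformity: the relations produced are the linear parts of the finitely many proper weakly special subvarieties of $\mathbb G^n$ responsible for the atypical components of the intersection of $V$ with the graph of the exponential, and these depend only on the fixed $\mathbb Q$-variety $V$, which yields the finite list $\bar\alpha_i,\ i\in I$; the finiteness of $S(V)$ then follows by compactness, since if the conclusion failed for infinitely many $p$ with this fixed list, an ultrafilter concentrated on those primes would contradict the statement already established over $K$.

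The main obstacle is exactly the step just outlined. Ax-Schanuel is intrinsically a statement about moving families and requires a non-trivial derivation, whereas we are handed a single $p$-adic point per prime; the whole method consists in trading \emph{one point for infinitely many primes} for \emph{a point that genuinely varies}, which is what the ultraproduct is meant to supply. Making this rigorous --- in particular producing on $K$ a derivation compatible with $\mathbf e$, so that even a lone isolated point feels Ax-Schanuel, while simultaneously keeping both the family of relations and the exceptional set of primes finite --- is where the difficulty lies; note that $\dim V=n$ is the critical case, lying just below the range where ($p$-SC)$'$ would apply directly, so plain Ax-Schanuel does not settle it prime by prime.
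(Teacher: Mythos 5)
There is a genuine gap, and it sits exactly where you say the difficulty lies: you never actually produce the derivation. The entire technical content of the paper's proof is the construction you defer to a parenthetical ("a derivation manufactured from the internal analytic structure of $\mathbf e$"): the paper embeds $\mathbb{K}_{\cU}$ into the Hahn field $\mathbb{L}_\cU=k_\cU((t^\Gamma))$ via Kaplansky's theorem, equips $\mathbb{L}_\cU$ with a \emph{series} derivation $D$ whose constant field is exactly the residue field $k_\cU$ (following Kuhlmann--Matusinski--Shkop and van der Hoeven), and then proves --- via pseudo-Cauchy sequences and immediate extensions --- that the internal exponential $E([x_p])=[\exp_p(x_p)]$ coincides with the formal Taylor exponential on the relevant domain, so that $D(E(x))=E(x)Dx$ actually holds. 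Without that verification, Ax's theorem simply does not apply to the ultraproduct point, so your treatment of the "isolated point" case is an assertion, not an argument. Your per-prime substitute does not repair this: an \emph{arbitrary} nonzero rational vector field on the Zariski closure $W$ will not satisfy $\partial u_{n+i}=u_{n+i}\,\partial u_i$ (the derivation must be tangent to the analytic graph, and even then one must control its constant field); and Ax only yields $\sum m_iu_i\in C$ with $C\supseteq\mathbb{C}_p$, from which $c=0$ does not follow --- an element of $\mathbb{C}_p$ of norm $<1$ need not vanish. The paper gets the homogeneous relation precisely because in $\mathbb{L}_\cU$ the constants $k_\cU$ meet the maximal ideal $\mathbb{L}_\cU^{\circ\circ}$ only in $0$, and the $x_i$ lie in $\mathbb{L}_\cU^{\circ\circ}$; you have no analogue of this.

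A second gap: the finiteness of the family of linear relations $\bar\alpha_i$, $i\in I$, is obtained in your write-up by appealing to "the finitely many proper weakly special subvarieties responsible for the atypical components", which is an unproven (and far stronger) uniform Ax--Schanuel statement, not something you can cite here. The paper instead extracts both finiteness claims from the ultrafilter argument itself: the contrapositive shows that any family of witnesses without a uniform rational relation would contradict Ax--Schanuel in $\mathbb{L}_\cU$, whence the set $A$ of admissible tuples $(m_1,\dots,m_n)$ (up to scaling) is finite, and the finiteness of $S(V)$ follows by a diagonal construction over the putatively infinite union of exceptional sets. Your closing compactness argument for $S(V)$ is in the right spirit, but it presupposes the two unestablished points above. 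In short: correct strategy, correct diagnosis of the obstacle, but the obstacle is not overcome.
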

An equivalent statement (with a geometrical flavor) is the following: \\
\textit{ Let $V$ be a $\mathbb{Q}$-variety of dimension $n$ in a $2n$-space. If, for infinitely many primes $p$, $V$ has a $\mathbb{C}_p$-point of the form $(\bar{a}_p, \exp_p(\bar{a}_p))$, then there exist a finite set $S \subset \mathbb{P}$ and a finite set of hyperplanes $H_i \subset \mathbb{A}^{n}_\mathbb{Q}, i \in I$ such that for all $p \in \mathbb{P} \setminus \! S$, we have }
$$ \forall \bar{x}_p \in E_p^n, (\bar{x}_p, \exp_p(\bar{x}_p)) \in V(\mathbb{C}_p)  \longrightarrow (\exists i \in I) (\bar{x}_p \in H_i(\mathbb{C}_p)). $$  
In the above Theorem, the order of quantifiers is essential: for each variety $V$ as above, there is a set $S(V)$ of exceptional primes (i.e. primes $p$ for which the stated implication might not hold), and such set is only dependent on the variety $V$.  By \textit{almost all primes} we mean all except a finite set of primes. 
This is to distinguish from the notion of $\cU$-\textit{almost all} (for a given ultrafilter $\cU$) which will be encountered later. A \textit{uniform} rational linear dependence is a linear dependence of the form 
  $$ m_1 x_{1,p} + \dots + m_n x_{n,p} =0, $$
  for some \textit{fixed} rationals $m_1, \dots, m_n$, not all zero. The Theorem implies in particular that, for each family of $n$-tuples $(\bar{x})_p$ as above there exists a partition of $\mathbb{P} \setminus S(V)$ into finitely many sets, on each of which the obtained linear dependence is uniform. 
  
The method of proof uses Ax's result ~\cite{A} on Schanuel's property for differential exponential fields. \\
For each variety $V \subset \mathbb{G}^n$ of dimension $n$ as above, the conclusions of Theorem \ref{main} hold for all but possibly finitely many primes belonging to some exceptional set $S(V)$. For a particular variety $V \subset \mathbb{G}^n$ having dimension $ \leq n$, and a given prime $p \notin S(V)$, the conclusion of Theorem ~\ref{main} is strictly stronger than what is given by conjecture ($p$-SC) (or, more precisely, its equivalent ($p$-SC)'). That is, according to ($p$-SC), there might exist $\mathbb{Q}$-linearly independent tuples $\bar{x}_p$ (in the domain of $\exp_p$) for which $(\bar{x}_p, \exp_p(\bar{x}_p)) \in V(\mathbb{C}_p)$ if $V$ is of dimension $n$, while this is not the case for Theorem ~\ref{main} whenever $p \notin S(V)$. This is due to the statement of Ax's Theorem, in which the weak inequality in ($p$-SC) is replaced by a strict one.

Let $\cU$ be a non-principal ultrafilter over the set $\mathbb{P}$ of prime numbers. Consider the ultraproduct
$$ \mathbb{K}_{\cU} := \prod_{p \in \mathbb{P}} \mathbb{C}_p / \cU.$$
Then $\mathbb{K}_{\cU}$ is an algebraically closed valued field (whose valuation is induced by $p$-adic valuations on each $\mathbb{C}_p$). We may define a partial exponential map on $\mathbb{K}_{\cU}$, induced by the maps $\exp_p$. As explained in ~\cite{KMS}, $\mathbb{K}_{\cU}$ can be embedded in a differential exponential valued field, to which it is possible to apply Ax-Schanuel's Theorem.  \\
 Then by an application of \L \'os' Theorem on ultraproducts, we obtain the required result.

We will consider stronger versions of these results in a forthcoming paper. Theorem \ref{main} will be proved in section \ref{Proofs}, after several preliminary sections, which contain reminders of known results concerning valued fields, ultraproducts of valued fields and other related concepts.   
\subsection*{Acknowledgments}
I am grateful to the anonymous referee for his careful reading of the manuscript, and for many valuable
comments and suggestions. 
\section{Background }
 In this section we introduce background results that will be needed in the rest of the paper.  \\
Recall that the field $\mathbb{C}_p$ is the completion (with respect to the norm $\abs{\cdot}_p$) of an algebraic closure of $\mathbb{Q}_p$, the field of $p$-adic numbers.
One may consider instead the additive valuation $\ordp$ defined on $\mathbb{C}_p$. This valuation is defined through the relation:
$$ \abs{z}_p = p^{-\ordp(z)}. $$

In ~\cite{A} J. Ax proved the following result, already conjectured by S. Schanuel:
\begin{theorem} \label{Ax} (Ax ~\cite{A})
 Let $K$ be a differential field equipped with a derivation $D$, and let $C$ be its field of constants. \\
 Let $y_1,\dots, y_n, z_1, \dots, z_n \in K^{\times}$ be such that $Dy_i = Dz_i/z_i$. \\
 Assume that the $y_i,\;  i=1, \dots, n$ are $\mathbb{Q}$-linearly independent modulo $C$, then
$$ {\rm td}_{C} (y_1,\dots, y_n, z_1, \dots,z_n) \geq n+1. $$
\end{theorem}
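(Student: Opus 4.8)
The plan is to prove the contrapositive. So I assume $\mathrm{td}_{C}(y_1,\dots,y_n,z_1,\dots,z_n)\le n$ and seek integers $m_1,\dots,m_n$, not all zero, with $m_1y_1+\dots+m_ny_n\in C$, contradicting the assumed $\mathbb{Q}$-linear independence of the $y_i$ modulo $C$. I work in characteristic $0$ (where Ax's statement lives), inside the finitely generated differential field $F=C(y_1,\dots,y_n,z_1,\dots,z_n)\subseteq K$. Note that in characteristic $0$ the constant field $C=\ker D$ is automatically relatively algebraically closed in $K$: any $a$ algebraic over $C$ satisfies $Da\cdot P'(a)=0$ for its minimal polynomial $P$, and $P'(a)\neq0$ by separability, forcing $Da=0$. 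Set $r:=\mathrm{td}_{C}F\le n$.

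First I would pass to the module of K\"ahler differentials $\Omega_{F/C}$ with universal derivation $d\colon F\to\Omega_{F/C}$. In characteristic $0$ the extension $F/C$ is separably generated, so $\Omega_{F/C}$ is an $F$-vector space of dimension exactly $r$, and $df=0$ holds iff $f$ is algebraic over $C$, i.e.\ (by the previous remark) iff $f\in C$. Hence the target relation $\sum_i m_iy_i\in C$ is \emph{equivalent} to the vanishing $\sum_i m_i\,dy_i=0$ in $\Omega_{F/C}$, which reduces the theorem to a statement purely about $1$-forms.

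Next I would record the structural facts. Since $F$ is generated over $C$ by the $y_i$ and $z_i$ and each $z_i$ is invertible, the $2n$ forms $dy_1,\dots,dy_n,\ dz_1/z_1,\dots,dz_n/z_n$ already generate $\Omega_{F/C}$; as $\dim_F\Omega_{F/C}=r\le n$, they satisfy at least $2n-r\ge n$ independent $F$-linear relations. The derivation $D$ induces the $F$-linear functional $\langle\,\cdot\,,D\rangle\colon\Omega_{F/C}\to F$, $df\mapsto Df$, and the hypothesis $Dy_i=Dz_i/z_i$ says precisely that each form $\omega_i:=dz_i/z_i-dy_i$ lies in its kernel, $\langle\omega_i,D\rangle=0$. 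Moreover every $\omega_i$ is closed: in $\wedge^2\Omega_{F/C}$ one has $d(dz_i/z_i)=-\,dz_i\wedge dz_i/z_i^2=0$ and $d(dy_i)=0$. Thus I am confronted with $n$ closed $1$-forms, all annihilated by $D$, together with the abundance of relations forced by $r\le n$.

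The hard part is the rank estimate that yields Ax's strict ``$+1$''. Two features must be combined. First, since some $y_i\notin C$ we have $D\neq0$ on $F$, so $\langle\,\cdot\,,D\rangle$ is a nonzero $F$-linear functional and its kernel --- which contains all the $\omega_i$ --- has dimension only $r-1$; this codimension-one phenomenon is the ultimate source of the strict inequality, i.e.\ of the replacement of the weak bound $\ge n$ by $\ge n+1$. Second, a rigidity argument is needed to turn the $F$-linear relations forced by $r\le n$ into relations with \emph{rational} coefficients among the $dy_i$: applying $D$ to a relation and using closedness should force its coefficients to be constants, and then the multiplicative (logarithmic) nature of the forms $dz_i/z_i$ --- whose only relations come from integral characters $\prod_i z_i^{m_i}$ of $\mathbb{G}_m^n$ --- upgrades these constants to integers, producing $\sum_i m_i\,dy_i=0$ with $m_i\in\mathbb{Z}$ not all zero, hence $\sum_i m_iy_i\in C$. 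I expect this rigidity-plus-integrality step, and the precise bookkeeping of the codimension-one kernel that produces the extra unit, to be the principal difficulty; granting it, $r\le n$ is contradicted and $\mathrm{td}_C(y_1,\dots,y_n,z_1,\dots,z_n)\ge n+1$ follows.
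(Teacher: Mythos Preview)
The paper does not supply its own proof of this theorem: it is quoted verbatim as Ax's result from~\cite{A} and immediately restated in contrapositive form (Theorem~\ref{Theorem A}) with a citation to~\cite{KMS}, then used as a black box. So there is no proof in the paper to compare your attempt against.

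As a standalone sketch of Ax's argument, your outline is pointed in the right direction---Ax's original proof does proceed via K\"ahler differentials $\Omega_{F/C}$, the closed forms $\omega_i=dz_i/z_i-dy_i$, and a dimension count exploiting that $D$ cuts out a hyperplane---but it is not a proof. You yourself flag the ``rigidity-plus-integrality step'' as the principal difficulty and then simply grant it. That step is the entire content of the theorem: the passage from an $F$-linear relation among the $\omega_i$ (or among the $dy_i$, $dz_i/z_i$) to one with \emph{constant} coefficients does not follow just from ``applying $D$ and using closedness''---closedness is a condition in $\Omega^2$, while the relation lives in $\Omega^1$, and $D$ acts on $F$, not on forms, so one needs a genuine argument (in Ax, a lemma on differentials of the second kind / invariant differentials on $\mathbb{G}_a\times\mathbb{G}_m$, or equivalently Brieskorn--Seidenberg--Kolchin style reasoning about the rank of the matrix $(Dy_i,\,Dz_i/z_i)$ over $C$). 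Likewise the claim that relations among the $dz_i/z_i$ ``come from integral characters'' is exactly what must be proved, not assumed. Until that core lemma is supplied, the argument has a gap at its load-bearing point.
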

Recall that a \textit{derivation} over a (commutative) field $K$ is a map $D: K \to K$ satisfying additivity ($D(x+y) = Dx + Dy$) and Leibniz rule ($D(xy)=xDy +yDx$). The {\it field of constants for $D$} is the set of $x \in K$ for which $Dx=0$. Using additivity and Leibniz rule, one can see that $C$ is indeed a subfield of $K$. \\ 
In ~\cite{KMS} this result was restated as follows:
\begin{theorem} \label{Theorem A}
  Let $y_1, \dots,y_n,z_1, \dots,z_n \in K^\times$ be such
that $Dy_k = \frac{Dz_k}{z_k} \mbox{ for } k= 1,\dots,n$. \\
 If ${\rm td}_CC(y_1,\dots,y_n, z_1, \dots,z_n) \leq n$, then $\sum_{i=1}^n
m_iy_i\in C$ for some $m_1, \dots, m_n \in \mathbb{Q}$ not all zero.
\end{theorem}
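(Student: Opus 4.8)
The plan is to deduce Theorem \ref{Theorem A} directly from Ax's Theorem \ref{Ax} by contraposition, so that no new analytic or algebraic input is needed beyond unwinding the definition of $\mathbb{Q}$-linear independence modulo the constant field $C$.

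First I would record the elementary observation that the failure of the desired conclusion is exactly the hypothesis of Theorem \ref{Ax}: to say that there are \emph{no} rationals $m_1,\dots,m_n$, not all zero, with $m_1y_1+\dots+m_ny_n\in C$ is, by definition, to say that $y_1,\dots,y_n$ are $\mathbb{Q}$-linearly independent modulo $C$. (In particular, if some $y_i$ already lies in $C$ the conclusion is immediate, taking $m_i=1$ and the remaining $m_j=0$, so one may as well assume no $y_i$ is constant.)

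Then I would argue as follows. Suppose, for contradiction, that $\mathrm{td}_C C(y_1,\dots,y_n,z_1,\dots,z_n)\le n$ while no nontrivial rational relation $\sum_i m_iy_i\in C$ holds. By the previous paragraph, $y_1,\dots,y_n$ are $\mathbb{Q}$-linearly independent modulo $C$; together with the standing hypotheses $y_i,z_i\in K^{\times}$ and $Dy_i=Dz_i/z_i$, all the assumptions of Theorem \ref{Ax} are satisfied. Hence $\mathrm{td}_C C(y_1,\dots,y_n,z_1,\dots,z_n)\ge n+1$, contradicting the assumed bound $\le n$. Therefore such a relation must exist, which is precisely the assertion of Theorem \ref{Theorem A}.

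I do not expect any genuine obstacle here: the whole mathematical content is carried by Ax's Theorem \ref{Ax}, and what remains is a purely logical reformulation. The single point deserving a line of care is the translation between the two renderings of the linear-algebra condition over $C$ — ``$y_1,\dots,y_n$ linearly dependent modulo $C$'' versus ``some nonzero $\mathbb{Q}$-linear combination of the $y_i$ lies in $C$'' — together with the trivial degenerate cases noted above. This reformulated shape of the hypothesis is exactly what will later be convenient to feed into the \L \'os-theorem transfer to the ultraproduct $\mathbb{K}_{\cU}$.
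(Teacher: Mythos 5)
Your proposal is correct: Theorem \ref{Theorem A} is precisely the contrapositive of Theorem \ref{Ax}, once one notes that the nonexistence of a nontrivial rational relation $\sum_i m_i y_i \in C$ is by definition the $\mathbb{Q}$-linear independence of $y_1,\dots,y_n$ modulo $C$. This matches the paper, which offers no separate argument and simply presents the statement as a restatement of Ax's theorem following~\cite{KMS}.
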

A corollary of the above is given by (this is essentially Corollary 3 in ~\cite{KMS}): 
\begin{prop} \label{ASP}
	 Let $(K,\exp)$ be a partial differential exponential field (that is, a field equipped with a partial exponential map $\exp$, satisfying $D\exp(x) = \exp(x)Dx$), with a field of constants $k$. Then, for any $n$-tuple $\bar{x} := (x_1,\dots,x_n) \in K^n$ of elements of $K$, where $x_1, \dots, x_n$ belong to the domain of the exponential map. \\
	If $(\bar{x}, \exp (\bar{x}))  \in V(K)$ for some algebraic variety $V$ of dimension $n$ with rational coefficients, 
	 $V \subset \mathbb{G}^n$, 
	then $\sum_{i=1}^n m_i x_i \in k$, for some $m_1, \dots, m_n \in \mathbb{Q}$ not all zero.
\end{prop}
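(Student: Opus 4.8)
The plan is to apply Theorem \ref{Theorem A} essentially verbatim, taking $y_i := x_i$ and $z_i := \exp(x_i)$. First I would dispose of a degenerate case. If some $x_i = 0$, then $x_i = 0 \in k$ (the derivation annihilates $0$), so the conclusion already holds with $m_i = 1$ and all other $m_j = 0$. Hence I may assume $x_i \neq 0$, i.e. $y_i \in K^\times$, for every $i$. Likewise $z_i = \exp(x_i) \in K^\times$: indeed $\exp(x_i)\exp(-x_i) = \exp(0) = 1$, and $-x_i$ again lies in the domain of $\exp$ (a ball centred at $0$, hence symmetric). So $y_i, z_i \in K^\times$, as demanded by the hypotheses of Theorem \ref{Theorem A}.

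Next I would verify the differential relation $Dy_i = Dz_i/z_i$. This is immediate from the defining property of the partial exponential map: $Dz_i = D\exp(x_i) = \exp(x_i)\,Dx_i$, so that $Dz_i/z_i = Dx_i = Dy_i$ for each $i = 1,\dots,n$.

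The remaining ingredient is the transcendence-degree bound ${\rm td}_k\, k(y_1,\dots,y_n,z_1,\dots,z_n) \le n$. Since $D$ kills $\mathbb{Q}$, one has $\mathbb{Q} \subseteq k$, so the $\mathbb{Q}$-variety $V$ is in particular defined over $k$. The point $(\bar{x}, \exp(\bar{x})) = (\bar{y}, \bar{z})$ lies in $V(K)$, so its Zariski closure over $k$ is a $k$-subvariety of $V$, of dimension at most $\dim V = n$; the transcendence degree of $k(\bar{y},\bar{z})$ over $k$ equals that dimension, hence is $\le n$. Theorem \ref{Theorem A} then yields rationals $m_1,\dots,m_n$, not all zero, with $\sum_{i=1}^n m_i y_i \in k$, that is, $\sum_{i=1}^n m_i x_i \in k$, which is the assertion.

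The argument is essentially bookkeeping around Theorem \ref{Theorem A}; the only points needing a little care are the reduction to the case $x_i \neq 0$ (so that one genuinely lands in $K^\times$, as the hypothesis of Theorem \ref{Theorem A} requires) and the observation that ``$V$ is a $\mathbb{Q}$-variety of dimension $n$'' is exactly what bounds the transcendence degree of the coordinates of $(\bar{y},\bar{z})$ by $n$ \emph{relative to the constant field $k$}, rather than merely relative to $\mathbb{Q}$.
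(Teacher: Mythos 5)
Your proposal is correct and is exactly the derivation the paper intends: the paper states Proposition \ref{ASP} without proof as an immediate corollary of Theorem \ref{Theorem A} (citing Corollary 3 of \cite{KMS}), and your argument supplies precisely the missing bookkeeping --- the reduction to $y_i, z_i \in K^\times$, the relation $Dy_i = Dz_i/z_i$, and the observation that $\mathbb{Q} \subseteq k$ so that $\dim V = n$ bounds ${\rm td}_k\, k(\bar{y},\bar{z})$ by $n$. No gaps.
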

\subsection{Language and Logical Setting} \label{lang}
Let $L=(+, -, \cdot, (\,)^{-1}, 1,0)$ be the language of fields, with the standard interpretation of the symbols involved.
We consider the expansion $\cL$ of $L$:
$$ \cL = L \cup \{R, \textrm{Exp}\}, $$
where $R$ is a unary predicate symbol while $\textrm{Exp}$ is a function symbol (to be interpreted as an exponential map $K \to K^\times$, with $K^\times$ being the set of invertible elements of $K$).     \\
Let $K$ be a differentially valued partial exponential field. Denote by val the valuation on $K$, $\Gamma$ its value group, and let $R$ be the valuation ring with maximal ideal $\cP$.  As we will see below, in many cases of interest one can extend the partial exponential on $K$ to a total exponential map (which is not uniquely determined though). Notable exceptions are Laurent power series fields, as well as generalized power series fields (whose definition will be recalled below).  It follows that by making the appropriate interpretation of each symbol of $\cL$, the field $K$ is then naturally an $\cL$-structure (with $\textrm{Exp}$ denoting the (extended) exponential map). \\
 Furthermore, the maximal ideal $\cP$ of $R$ can be defined as follows: 
$$ x \in \cP \quad {\rm iff} \quad x \in R \; \& \; x^{-1} \notin R. $$
Assume now that $R$ is a discrete valuation ring, and let $\pi \in R$ be a uniformizer, i.e. val$(\pi) =1$.  Let $\cL_\pi$  be the expansion $\cL \cup \{\pi\}$, with $\pi$ denoting a constant in $K$.  \\ 
Using $\pi$, the valuation val can be defined using the predicate $R$ in a standard way: ${\rm val}(x) \geq 0$ iff $R(x)$ ($x$ is in the valuation ring), and for all $x \in K$, val$(x) = n \in \mathbb{Z}$ iff val$(x/\pi^n) \geq 0 \; \&$ val$(x/\pi^n) \leq 0$.  \\ 
For any valued field $(K,{\rm val})$ (with a possibly non discrete, or even, a non-archimedean value group $G$ where we fix a (generally non-canonical) embedding $\mathbb{Z} \hookrightarrow G$ and identify $\mathbb{Z}$ with its image in $G$), one can still use the language $\cL_\pi$ (for some $\pi$ satisfying val$(\pi)=1 $), and in this case any set of the form $\{ x \in K \; | \; {\rm val}(x) > e\}, e \in \mathbb{Q}$ is $\cL_\pi$-definable.  Explicitly, the above set is defined through the formula (below $e=n/m$):
$$ \varphi_e(x): R(\frac{x^m}{\pi^n}) \; \& \; \neg R(\frac{\pi^n}{x^m}).     $$
The complex $p$-adic field $\mathbb{C}_p$ falls in particular in the above case: the value group of the standard valuation $\ordp$ is $\mathbb{Q}$, and any non-principal  ultraproduct $\prod_p \mathbb{C}_p/\cU$ (see below) has a non-archimedean value group. \\ 
From the above remarks, one can see that a formula of the form val$(x) = $ val$(y)$ is an abbreviation of $R(x/y) \; \& \; R(y/x)$. Note that the expressive power of the language $\cL$ falls short of defining every ball in $K$ (a set of the form val$(x-a) \geq g$ for $g \in G$ and some $a \in K$), since $g$ might be a non-standard element. 

An $\cL$-structure is a tuple $(K, R, \textrm{Exp})$, where $K$ is a valued field, $R$ its valuation ring and $\textrm{Exp}$ is an exponential map $\textrm{Exp}: K \to K^\times$.
\subsection{The field $\mathbb{K}_{\cU}$}
Let $\mathbb{P}$ be the set of prime numbers, and let $\cU$ be a non-principal ultrafilter on $\mathbb{P}$.  \\
 Here the predicate $R$ is interpreted as the set $\mathbb{C}^\circ_p$ of complex $p$-adic numbers with non-negative $p$-adic valuation. \\
Define the field $\mathbb{K}_\cU$ as the ultraproduct of the fields $\mathbb{C}_p$:
$$ \mathbb{K}_\cU := \prod_{p \in \mathbb{P}} \mathbb{C}_p / \cU. $$
The field $\mathbb{K}_{\cU}$ becomes an $\cL$-structure upon interpreting the function and predicate symbols in the standard way, for instance $R([(x_p)_{p \in \mathbb{P}}])$ if and only if the set of $p \in \mathbb{P}$ for which $x_p \in \mathbb{C}_p^\circ$ is in $\cU$. In this case we say that $x_p \in \mathbb{C}^\circ_p$ for $\cU$-almost all primes. \\
By application of \L o\'s Theorem on ultraproducts, $\mathbb{K}_{\cU}$ is shown to be an algebraically closed field equipped with the valuation induced by $\ordp$ (for $p$ running over $\mathbb{P}$).
Equip $\mathbb{K}_{\cU}$ with the valuation val defined as:
$$ {\rm val} ([x]) = [(\ordp(x_p))_{p \in \mathbb{P}}], $$
where we have used the notation $[x] := [(x_p)_{p \in \mathbb{P}}] \in \mathbb{K}_{\cU}$. The elements  $ (\ordp(x_p))_{p \in \mathbb{P}}$ belong to the Cartesian product of value groups $\prod_{p \in \mathbb{P}} \mathbb{Q}$, and $ [(\ordp(x_p))_{p \in \mathbb{P}}]$ belongs to the ultrapower of $\mathbb{Q}$, i.e. $\mathbb{Q}^\cU$. It is immediate to verify that val is indeed a valuation on $\mathbb{K}_{\cU}^{\times}$.  \\
For more details about ultraproducts of valued fields (and ultraproducts in general), see, e.g. ~\cite{S}.   \\
Let $k_\cU$ be the residue field, $k_\cU= R/P$, with $R$ and $P$ the valuation ring and its maximal ideal. It follows from \L o\'s Theorem that $k_\cU$ is an algebraically closed field of characteristic zero, hence $\mathbb{K}_\cU$ is an equicharacteristic valued field.   
\subsection{The exponential map}
Let $p$ be a prime number. 
 Fix an extension ${\rm EXP}_p$ of the $p$-adic exponential $\exp_p$ such that ${\rm EXP}_p$ is an exponential map defined for all elements of $\mathbb{C}_p$, i.e. ${\rm EXP}_p: \mathbb{C}_p \to \mathbb{C}_p^{\times}$ and
\begin{eqnarray*}
	\forall x \in \mathbb{C}_p, \abs{x}_p <p^{-1/p-1}, {\rm EXP}_p(x)  & = & \exp_p(x), \\
	\forall x, y \in \mathbb{C}_p \;\; {\rm EXP}_p(x+y) &  = & {\rm EXP}_p(x) {\rm EXP}_p(y).
\end{eqnarray*}
The existence of such an extension is guaranteed by Zorn Lemma (see ~\cite{R}  chap. 5, section 4.4). However, it is not unique. It can be seen that ${\rm EXP}_p$ is a continuous homomorphism from the additive group $(\mathbb{C}_p, +)$ to the multiplicative group $(\mathbb{C}^{\times}_p, \cdot)$.  \\ 
For each prime $p$, the field $\mathbb{C}_p$ equipped with the exponential map EXP$_p: \mathbb{C}_p \to \mathbb{C}_p^{\times}$ is a structure for $\cL$.  \\
Note that the use of the extension EXP$_p$ (rather than just the standard $p$-adic exponential $\exp_p$) seems to be useful from the model-theoretic point of view, in view of the intended application. More precisely, since we are considering an ultraproduct of the $\mathbb{C}_p$'s, the map $E([x]) := [(\exp_p(x_p))_p]$ (see below) is defined on an open disc around the origin of radius $1 -\epsilon$, with $\epsilon > 0$ an infinitesimal, whereas the domain of $\exp_p$ is the open disc of radius $r_p:=  p^{-1/(p-1)}$ as already observed. Using instead the maps EXP$_p$, allows us to have a uniform definition of the domain of the exponential map. 
\subsection{Ordered abelian groups}
 Let $(G, +, \leq)$ be an ordered abelian group under the law $+$, where $\leq $ denotes the order relation on $G$.  Let $G^{>0}$ be the semi-group of positive elements of $G$ (i.e. elements greater than $0$).    \\
Let $\Delta$ be the set of \textit{archimedean classes} of $G^{>0}$ (see, e.g. ~\cite{Hah}). The archimedean class of an element $g \in G$ will be denoted by $[g]$.  \\
If $\Delta$ is not a singleton, we say that $G$ is non-archimedean. The set $\Delta$
comes equipped with the inherited order $\preceq$ defined as:
$ \delta_1=[g_1] \preceq \delta_2=[g_2] \;\; {\rm iff} \; \; (\abs{g_2} \leq \abs{g_1}),$
for any $\delta_1, \delta_2 \in \Delta$. Obviously, we may define the induced relations $\prec$ and $\succ$ in a similar way. Let $[0]= \infty$. The order $\preceq$ can then be extended to $\Delta \cup \{\infty\}$ by setting $\delta \preceq \infty$ for all $\delta \in \Delta \cup \{\infty\}$.   \\
Denote by $v_1$ the map (called natural valuation) $v_1: G  \to \Delta \cup \{\infty \}$ defined as $v_1(g) = [g]$. 
\subsubsection{Hahn Embedding Theorem}
A central result in the theory of linearly ordered abelian groups is the following: \\
 Let $G$ be a linearly ordered abelian group.
Then there exists an embedding of ordered groups $i: G \hookrightarrow H(\Delta) \subset \mathbb{R}^{\Delta}$ where $\Delta$ is the set of archimedean classes of $G$, and $H(\Delta)$ (the {\it Hahn group} with respect to $\Delta $) is given by
 $$  H(\Delta) := \{a= (a_\gamma)_{\gamma \in \Delta}: a_\gamma \in \mathbb{R} \;  {\rm and}  \; {\rm Supp} (a) {\rm \;  is \;  well  \; ordered} \}.  $$
  Here $H(\Delta)$ is equipped with the lexicographic order, and Supp$(a)$ (for $a \in \mathbb{R}^{\Delta}$) is defined as
$$ {\rm Supp}(a) := \{ \gamma \in \Delta: a_\gamma \neq 0\}. $$
Any element $g$ of $G$ can be written as $g = \sum_{\phi \in \Delta} g_\phi \mathbf{1}_\phi$ where $g_\phi \in \mathbb{R}$ and ${\mathbf{1}}_\phi, \phi \in \Delta$ the element of $\Gamma$ that corresponds through the embedding $i$ to $(a_\psi)_{\psi \in \Delta} \in \mathbb{R}^\Delta$, with $a_\phi=1$ $a_\psi=0$, for $\psi \neq \phi$. We have $ v_1(g) = \min(\rm Supp(g)) \in \Delta \cup \{\infty\}$. \\
Let $\Gamma$ be the value group of $\mathbb{K}_\cU$, $\Gamma:= (\prod_{p \in \mathbb{P}} \mathbb{Q} /\cU, +)$.   \\
Observe that we have a canonical embedding $\mathbb{Q} \hookrightarrow \Gamma$, $r \mapsto [(r_p)_{p \in \mathbb{P}}]$ (with $r_p =r$ for all $p \in \mathbb{P}$).
 An element of $\Gamma$ is called standard if it is in the image of $\mathbb{Q}$ by this embedding. \\
Let $\gamma:= [(g_p)_{p \in \mathbb{P}}]$ be an element of $\Gamma$ such that for any $\varepsilon >0$, there exists $p_0 \in \mathbb{P}$ for which $\forall p \in \mathbb{P}, \, p >p_0 \Rightarrow \! \abs{g_p} < \varepsilon$. Then clearly, $\gamma$ is an infinitesimal element, since it is smaller (in absolute value) than any element of $\mathbb{Q}^{>0}$. Similarly, an element $[(g_p)_{p \in \mathbb{P}}]$ of $\Gamma$ is infinite iff it satisfies
$$  \forall A \in \mathbb{Q}^{>0},  \exists p_0 \in \mathbb{P} (p > p_0 \rightarrow \abs{g_p} > A). $$
Note that the above definitions are not first-order, since we have no way of quantifying over standard positive rationals in the language. 
 We have: 
 \begin{prop}
   The group $\Gamma$ is an ordered abelian group. Furthermore, the set $\Delta$ of archimedean classes of $\Gamma$ is an infinite, unbounded, densely linearly ordered set having uncountable cofinality.  
 \end{prop}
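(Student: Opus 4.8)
The plan rests on the fact (already noted above) that, each $\mathbb{C}_p$ having value group $\mathbb{Q}$, the group $\Gamma$ is the ultrapower $\mathbb{Q}^{\cU}$ of the ordered abelian group $(\mathbb{Q},+,\le)$; hence statements about elements of $\Gamma$ can be checked on coordinate representatives through \L o\'s' Theorem. That $\Gamma$ is an ordered abelian group is then immediate, the relevant axioms (group axioms, totality of $\le$, translation invariance) being first order --- in fact $\Gamma$ comes out ordered, torsion free and divisible, and the set $\Delta$ of archimedean classes is automatically linearly ordered. For the rest I will use the following reading of the order: for \emph{positive} $a=[(a_p)_p],b=[(b_p)_p]$ in $\Gamma$ one has $[a]\prec[b]$ exactly when $a$ \emph{infinitely exceeds} $b$, i.e.\ $a>mb$ in $\Gamma$ for every positive integer $m$ --- equivalently (after modifying the representatives on a set outside $\cU$ so that $a_p,b_p>0$ for all $p$) $\{p:a_p/b_p>m\}\in\cU$ for every $m$. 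This is just the definition ``$\abs{g_2}\le\abs{g_1}$'' read as ``$\abs{g_2}$ is dominated by $\abs{g_1}$''. Note $\Delta\neq\emptyset$, since $\Gamma$ contains the positive standard element $1$, and recall that every cofinite subset of $\mathbb{P}$ belongs to the non-principal ultrafilter $\cU$.

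To see $\Delta$ has no endpoints: given positive $g=[(g_p)_p]$ with all $g_p>0$, the element $[(g_p/p)_p]$ is positive and, as $\{p:g_p>mg_p/p\}=\{p:p>m\}$ is cofinite, $g$ infinitely exceeds $[(g_p/p)_p]$; thus $[g]\prec[(g_p/p)_p]$, so $[g]$ is not the largest class. Dually $[(pg_p)_p]$ infinitely exceeds $g$, so $[(pg_p)_p]\prec[g]$ and $[g]$ is not the smallest. Hence $\Delta$ has no endpoints, and, being non-empty, is infinite. For density I would interpolate by a ``geometric mean'': given $[a]\prec[b]$ with $a,b$ positive and $\{p:a_p/b_p>m\}\in\cU$ for all $m$, pick for each $p$ a rational $c_p$ with $\tfrac{1}{2}\sqrt{a_pb_p}\le c_p\le 2\sqrt{a_pb_p}$ and set $c=[(c_p)_p]$. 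Then $a_p\ge\tfrac{1}{2}\sqrt{a_p/b_p}\,c_p$ and $c_p\ge\tfrac{1}{2}\sqrt{a_p/b_p}\,b_p$, so for every $m$ the set $\{p:a_p/b_p>4m^2\}\in\cU$ satisfies $a_p>mc_p$ and $c_p>mb_p$; hence $a$ infinitely exceeds $c$, which infinitely exceeds $b$, i.e.\ $[a]\prec[c]\prec[b]$. So $\Delta$ is densely ordered.

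The one step requiring a real idea --- the main obstacle --- is the uncountable cofinality, for which I would run a diagonal argument over the countable index set $\mathbb{P}$. Given countably many classes $[g^{(k)}]$, $k\in\mathbb{N}$, with representatives $g^{(k)}=[(g^{(k)}_p)_p]$ all of whose coordinates are positive, enumerate $\mathbb{P}=\{p_1<p_2<\cdots\}$ and set $n(p)=\#\{i:p_i\le p\}$, so that $\{p:n(p)\ge k\}$ is cofinite --- hence in $\cU$ --- for every $k$. Put $h_p:=\tfrac{1}{p}\min\{g^{(j)}_p:1\le j\le n(p)\}>0$ and $h:=[(h_p)_p]$. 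Fixing $k$, on the cofinite set $\{p:n(p)\ge k\}$ we have $h_p\le\tfrac{1}{p}g^{(k)}_p$, so $g^{(k)}_p/h_p\ge p$; intersecting with $\{p:p>m\}$ shows $g^{(k)}_p>mh_p$ on a set in $\cU$ for every $m$, so $g^{(k)}$ infinitely exceeds $h$, i.e.\ $[g^{(k)}]\prec[h]$. Thus no countable subset of $\Delta$ is cofinal, so $\mathrm{cf}(\Delta)$ is uncountable (and the mirror choice $h_p=p\cdot\max\{g^{(j)}_p:1\le j\le n(p)\}$ shows no countable subset is coinitial either). The remaining points are routine: cofinite subsets of $\mathbb{P}$ lie in $\cU$ because $\cU$ is non-principal, archimedean classes are insensitive to coordinatewise rational approximation, and only minor bookkeeping is needed to pass between $\preceq$ and ``infinitely exceeds''.
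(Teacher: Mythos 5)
Your proof is correct, and it takes a noticeably more explicit route than the paper's. The paper gets unboundedness of $\Delta$ abstractly: it puts the ordered-field structure of $\mathbb{Q}^{\cU}$ on $\Gamma$, so that $\Delta$ becomes the value group of the natural valuation $v_1$, and then invokes non-archimedeanness of $\Gamma$; it proves density by contradiction via an order-type/semigroup argument; and it proves uncountable cofinality by a diagonal $\alpha_k=\delta_{kk}$ after claiming the double array of representatives $(\delta_{nk})$ may be taken strictly increasing --- a normalization it does not justify. You instead exhibit concrete witnesses throughout: $[(g_p/p)_p]$ and $[(pg_p)_p]$ for the absence of endpoints, a rational approximation to the geometric mean $\sqrt{a_pb_p}$ for density (the factor-of-$2$ bookkeeping with $a_p/b_p>4m^2$ checks out), and the diagonal element $h_p=\tfrac1p\min\{g^{(j)}_p: j\le n(p)\}$ for cofinality. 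Your cofinality argument is the robust version of the paper's diagonalization: taking the minimum over the first $n(p)$ sequences and dividing by $p$ sidesteps any monotonicity assumption on the array, and the mirror construction with $\max$ handles coinitiality as well. What the paper's approach buys is brevity and a conceptual identification of $\Delta$ as a value group; what yours buys is a self-contained, verifiable argument in which every step reduces to a cofinite set of primes lying in the non-principal ultrafilter. One small point to make explicit if you write this up: justify that for positive $a,b$ in distinct classes, totality of the class order forces one of them to infinitely exceed the other, so that your reading of $\prec$ as strict domination is indeed equivalent to the paper's $\preceq$ together with inequality of classes.
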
 
\begin{proof} 
	The first assertion follows using standard properties of ultraproducts, e.g. ~\cite{AP}.
	To see that $\Delta$ is unbounded we equip $\Gamma$ with the multiplicative operation (compatible with the order) induced by standard multiplication on $\mathbb{Q}$, endowing $\Gamma$ with an ordered field structure. Hence $\Delta$ acquires a group structure through $v_1(\alpha) + v_1(\beta)= v_1(\alpha \beta)$, for all $\alpha, \beta \in \Gamma^{>0}$, where $v_1: \Gamma \to \Delta \cup \{\infty \}, g \mapsto [g]$ as defined above. \\
    It follows that $\Delta$ is the value group for the natural valuation $v_1$. Now the required conclusion follows since $\Gamma$ is nonarchimedean.   \\ 
	Let us now show that $\Delta$ has uncountable cofinality.    \\ 
	Assume there exists a countable sequence $(\delta_n)_{n \in \mathbb{N}}, \delta_n \in \Gamma$ such that 
	$$ \forall \alpha \in \Gamma, \exists n_0 \in \mathbb{N}, \forall n > n_0, [\alpha] \preceq [\delta_n].  \qquad \qquad   (\dagger)$$
	Writing $\delta_n=[(\delta_{nk})_{k \in \mathbb{P}}]$, $\delta_{nk} \in \mathbb{Q}_+$, one can check that the double sequence $(\delta_{nk})_{n \geq 0, k \in \mathbb{P}}$ is strictly increasing (beyond some $n_0, k_0$). Now let $\alpha:= [(\alpha_k)_k]$ be defined by $\alpha_k = \delta_{kk}$.  \\
	Then it can be checked that $(\dagger)$ does not apply for $\alpha$, contradiction.  Since $\Gamma$ has cardinality $2^{\aleph_0}$, the cofinality of $\Delta$ is at most $2^{\aleph_0}$. \\
	Finally, to see that $\Delta$ is densely ordered, assume the contrary. It suffices then to observe that the induced order on the set $\Gamma^{\geq 0}$  
	is of type $>\omega$, on which there exists no possible cancellative semi-group structure compatible with the ordering. This contradiction proves the result.  	  
\end{proof}
\subsubsection{Kaplansky embedding theorem} 
Let $\Gamma$ be an ordered abelian group and $k$ a commutative field. Let $k((t^\Gamma))$ be the field of generalized power series with a well-ordered 
set of exponents in $\Gamma$ and coefficients in $k$. Denote by $v$ the $t$-adic valuation of $k((t^\Gamma))$.  
We are now able to apply the following:
\begin{theorem} \label{K} (Kaplansky ~\cite{K}) 
	Let $(K, \textrm{val})$ be a valued field of zero equi-characteristic, with value group $\Gamma$ and algebraically closed residue field $k$.
	Then $(K, \textrm{val})$ is \textbf{analytically isomorphic} to a subfield of $(k((t^\Gamma)), v)$, i.e. there exists a value preserving embedding of fields $K \hookrightarrow k((t^\Gamma))$.
\end{theorem}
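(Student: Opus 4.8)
The plan is to construct a value-preserving embedding $\iota\colon K\hookrightarrow k((t^\Gamma))$ directly, by a transfinite recursion that exploits the fact that $k((t^\Gamma))$ is \emph{maximally complete}. Three standard ingredients are needed. First, the notion of a \emph{pseudo-Cauchy} (pseudo-convergent) sequence $(a_\rho)_\rho$ in a valued field and of a \emph{pseudo-limit} of such a sequence. Second, the fact that a valued field is maximally complete (equivalently: spherically complete; equivalently: every pseudo-Cauchy sequence has a pseudo-limit in it) if and only if it admits no proper immediate extension. Third, the fact that $k((t^\Gamma))$ is itself maximally complete: given a pseudo-Cauchy sequence in $k((t^\Gamma))$ one assembles a pseudo-limit coefficient by coefficient, and the well-orderedness of supports guarantees that the resulting element again lies in $k((t^\Gamma))$. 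Note that $k((t^\Gamma))$ has residue field exactly $k$ and value group exactly $\Gamma$, the same data as $K$, and that it contains $k$ (as coefficient field) and a monomial $t^{\gamma}$ of value $\gamma$ for every $\gamma\in\Gamma$; this matching of data is what makes $k((t^\Gamma))$ the correct target.

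First I would set up a Zorn's lemma argument. Consider the set of pairs $(F,\iota)$ where $F$ is a subfield of $K$ and $\iota\colon F\to k((t^\Gamma))$ is a value-preserving field embedding subject to the invariant that the map it induces on residue fields is the inclusion $\overline F\hookrightarrow k$ and the map it induces on value groups is the inclusion $v(F)\hookrightarrow\Gamma$; this is consistent because $\overline F\subseteq\overline K=k$ and $v(F)\subseteq v(K)=\Gamma$. The inclusion of the prime field $\mathbb{Q}\hookrightarrow k\hookrightarrow k((t^\Gamma))$ is such a pair, and everything takes place inside the fixed set $k((t^\Gamma))$, so Zorn's lemma yields a maximal such pair, which I again denote $(F,\iota)$. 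It remains to prove $F=K$.

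Suppose $F\neq K$ and pick $a\in K\setminus F$; the goal is to extend $\iota$ to $F(a)$, contradicting maximality. I would distinguish cases according to whether the set $\{\,v(a-c):c\in F\,\}$ has a maximum. If it does, say at $c_0$, put $\gamma:=v(a-c_0)$: either $\gamma\notin v(F)$, and then $a\mapsto\iota(c_0)+t^{\gamma}$ extends $\iota$ (value-transcendental case); or $\gamma=v(d)$ for some $d\in F$, in which case the residue $\bar u\in k$ of $u:=(a-c_0)/d$ cannot lie in $\overline F$ (otherwise subtracting a lift of $\bar u$ would enlarge $v(a-c)$, contradicting maximality of the set), so $\bar u\in k\subseteq k((t^\Gamma))$ is new over $\overline F$ and $a\mapsto\iota(c_0)+\iota(d)\,\bar u$ extends $\iota$ (residue-transcendental or residue-algebraic case). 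If $\{\,v(a-c):c\in F\,\}$ has no maximum, there is a pseudo-Cauchy sequence $(c_\rho)$ in $F$ with pseudo-limit $a$ and no pseudo-limit in $F$, and $(\iota c_\rho)$ is again pseudo-Cauchy in $k((t^\Gamma))$ of the same type. If the sequence is of \emph{transcendental type}, maximal completeness of $k((t^\Gamma))$ provides a pseudo-limit $b$ of $(\iota c_\rho)$, and $a\mapsto b$ works because the isomorphism type of $F(a)/F$ is pinned down by the sequence. If it is of \emph{algebraic type}, $a$ is a root of the minimal-degree polynomial $P\in F[X]$ attached to the sequence; the structure theory of such sequences produces, in an algebraic extension of $k((t^\Gamma))$, a root $b$ of $\iota P$ that is a pseudo-limit of $(\iota c_\rho)$ and generates an immediate extension --- but $k((t^\Gamma))$ has no proper immediate extension, so $b$ already lies in $k((t^\Gamma))$, and $a\mapsto b$ extends $\iota$. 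In every case maximality is contradicted, so $F=K$ and $\iota$ is the desired embedding. (Alternatively, one can first pass to a maximal immediate extension $\widehat K$ of $K$, which is maximally complete with value group $\Gamma$ and residue field $k$, and then run the same recursion to build an isomorphism $\widehat K\cong k((t^\Gamma))$.)

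The step I expect to be the main obstacle is the algebraic-type pseudo-Cauchy case: producing an honest root of $\iota P$ in $k((t^\Gamma))$ which is a pseudo-limit of $(\iota c_\rho)$. This rests on the structure theorem for pseudo-Cauchy sequences of algebraic type together with a Newton-polygon / Hensel-type iteration in the target, and it is exactly here that the hypotheses are used: since $K$ --- hence $k((t^\Gamma))$ --- has equicharacteristic zero, Kaplansky's \emph{hypothesis~A} on the residue characteristic holds vacuously, so none of the obstructions familiar from mixed or positive residue characteristic appears, while algebraic closedness of $k$ makes the residue-algebraic steps automatic. Everything else --- the Zorn's lemma bookkeeping, the invariance of value group and residue field under the extensions considered, and the coefficientwise construction of pseudo-limits in $k((t^\Gamma))$ --- is routine.
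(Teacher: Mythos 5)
The paper does not prove this statement: it is imported verbatim from Kaplansky's 1942 paper \cite{K}, and the only commentary added is that, by Theorem 7 of \cite{K}, the factor sets appearing in Kaplansky's general embedding theorem can be dispensed with when the residue field $k$ is algebraically closed. So there is no in-paper argument to compare yours against; what you have written is a reconstruction of Kaplansky's own proof. Your overall route --- Zorn's lemma over value-preserving partial embeddings, maximal completeness of the Hahn field $k((t^\Gamma))$, the pseudo-Cauchy machinery, and the observation that hypothesis~A is vacuous in equicharacteristic zero --- is the standard and correct one, and you correctly locate the hard step in the algebraic-type pseudo-Cauchy case.

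There is, however, one genuine gap in the extension step as you organized it. Splitting on whether $\{v(a-c):c\in F\}$ attains a maximum, for an \emph{arbitrary} $a\in K\setminus F$, does not by itself license the assignments you propose. If the maximum is attained at $c_0$ with $\gamma=v(a-c_0)\notin v(F)$ but $n\gamma\in v(F)$ for some $n>1$, say $n\gamma=v(d)$, then one can have $v((a-c_0)^n-d)>n\gamma$ without contradicting the maximality of $c_0$ (the relevant $n$-th root of $d$ need not lie in $F$), whereas $v((t^\gamma)^n-\iota(d))=n\gamma$ generically; so $a\mapsto\iota(c_0)+t^\gamma$ need not be value-preserving on all of $F(a)$. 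Similarly, if $\bar u$ is algebraic over $\overline F$ while $u=(a-c_0)/d$ is transcendental over $F$, the assignment $u\mapsto\bar u$ kills a nonzero polynomial (a lift of the minimal polynomial of $\bar u$) and is not even injective on $F(u)$. The standard repair --- and what Kaplansky actually does --- is not to insist on extending to $F(a)$ for a preassigned $a$, but to note that if $F\neq K$ then either $v(F)\subsetneq v(K)$, or $\overline F\subsetneq\overline K$, or $K/F$ is immediate, and in each case to adjoin a carefully chosen witness (a value-group generator, a residue lift, or a pseudo-limit of a pseudo-Cauchy sequence without limit in $F$), in that order of priority. With that reorganization, plus Kaplansky's Theorems 1--3 for the immediate case, your sketch closes up into a complete proof; as written, the single-element dichotomy would fail on the examples above.
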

 The original statement in ~\cite{K} is more general, allowing non-algebraically closed residue fields at the expense of introducing \textit{factor sets} into the definition of the multiplicative operation of monomials in the power series field. By Theorem 7 of ~\cite{K}, this turns out not to be necessary in the special case of an algebraically closed residue field. 
\section{A differential exponential valued field}
 Now we consider again  the field $\mathbb{K}_{\cU}$. Observe that the valuation ring of $\mathbb{K}_{\cU}$ is given by
$$ R = \{ x:=[(x_p)_{p \in \mathbb{P}}] : {\rm val}(x) \geq 0 \},$$
where the order relation (on the value group of $\mathbb{K}_{\cU}$) has already been explained in the previous section. \\
We can easily show that the residue class field $k_\cU$ ($k_\cU = R/P$ where $P$ is the maximal ideal of $R$) is given by
$$ k_\cU = \prod_{p \in \mathbb{P}} \mathbb{F}_p^{\rm alg} /\cU,$$
where $\mathbb{F}_p^{\rm alg}$ is the algebraic closure of the finite field $\mathbb{F}_p$.  By Lefshetz principle (see, e.g. Theorem 2.4.3 ~\cite{S}) we have $k_\cU \simeq \mathbb{C}$, since both are algebraically closed fields of cardinality $2^{\aleph_0}$ having characteristic zero.   \\
Applying Kaplansky's result mentioned above, there exists an embedding of valued fields $\mathbb{K}_{\cU} \hookrightarrow \mathbb{L}_{\cU} := k_\cU((t^\Gamma))$. For each non-principal ultrafilter $\cU$ over $\mathbb{P}$, we fix an embedding $\iota_\cU$
$$ \iota_\cU: \mathbb{K}_{\cU} \hookrightarrow \mathbb{L}_{\cU}, $$
and we will denote by $v$ the canonical valuation on $\mathbb{L}_\cU$.  \\
The $p$-adic exponential map on each $\mathbb{C}_p$ can be used to introduce a total exponential map on $\mathbb{K}_{\cU}$.  \\
More precisely, one may show (using \L o\'s Theorem) that the map $\textrm{Exp}: [(x_p)_p]  \mapsto [({\rm EXP}_p(x_p))_p]$ is indeed an exponential map, $\mathbb{K}_{\cU} \to \mathbb{K}_\cU^{\times}$ (satisfying $\textrm{Exp}(x+y) =\textrm{Exp}(x) \cdot \textrm{Exp}(y)$).  
\subsubsection{An exponential differential field}  \label{expdif}
In order to be able to apply Ax's Theorem, we need to define an embedding of $\mathbb{K}_\cU$ into a (partial) exponential  differentiable field, along the lines of ~\cite{KM} and ~\cite{KMS} (see also ~\cite{M} for a general survey). As will be seen, this embedding need not be an embedding of \textit{differential fields}, neither this is assumed.  \\
First we define a right-shift map $\sigma: \Delta \to \Delta, \phi \mapsto \sigma(\phi)$ such that $\sigma(\phi) \succ \phi$ and $\sigma$ is order-preserving.  \\
Let $\delta$ be the archimedean class of some infinitesimal element of $\Gamma$. We set:
$$ \sigma: \Delta \to \Delta, \phi \mapsto \delta \cdot \phi. $$
 Here by $\delta \cdot \phi$ we mean the archimedean class of any product of two elements in $\delta$ and $\phi$ respectively. It can be seen that this is independent of the choices, and that, indeed $\sigma(\phi) \succ \phi, \forall \phi \in \Delta$.  \\
Then, as in "Case 1" of Example (6) in ~\cite{KMS}, one may define a derivation $D: \mathbb{L}_{\cU} \to \mathbb{L}_{\cU}$ with field of constants $k_\cU$,
and which satisfies furthermore: $Dx= \frac{D(\exp(x))}{\exp x}$ since D is a series derivation (see ~\cite{H}, Corollary (3.9)).  \\
Let us denote by $\mathbb{L}_{\cU}^{\circ}$ the ring of bounded elements of $\mathbb{L}_{\cU}$, and by  $\mathbb{L}_{\cU}^{\circ \circ}$ its maximal ideal, i.e. the ideal of infinitesimal elements. 
 Note that $\mathbb{L}_{\cU}^{\circ \circ} = k_{\cU}((t^{\Gamma^{>0}}))$ (the set of generalized power series with strictly positive support).    \\
Let $\cD_\cU$ be the set defined as:
$$  \cD_\cU := \bigg\{ x=[(x_p)_{p \in \mathbb{P}}] \in \mathbb{K}_{\cU} \; : \; \ordp(x_p) > \frac{1}{p-1} \; \textrm{for} \; \cU\!\!-\!\textrm{almost all} \;  p \in \mathbb{P} \bigg\}.$$
Consider the map $E: \cD_\cU \rightarrow \mathbb{K}_{\cU}^{\times} \subset \mathbb{L}_{\cU}^{\times}$ defined by
$$ [(x_p)]_{p \in \mathbb{P}} \mapsto E( [(x_p)]_{p \in \mathbb{P}}) := [(\exp_p(x_p))_{p \in \mathbb{P}}]. $$
Using \L \'os Theorem we see that $E$ is a partial exponential map on $\mathbb{L}_{\cU}$ (i.e. $E(x+y)= E(x) E(y)$).  \\
In what follows we note that, using Neumann Lemma (see ~\cite{N}), the series $\sum_{n \in \mathbb{N}} \frac{x^n}{n!}$ is summable for all $x \in k((t^{G^{>0}}))$, for any field $k$ and ordered abelian group $G$, and $\exp(x) \in k((t^G))$. In particular the map $\exp: \mathbb{L}_\cU^{\circ \circ} \to \mathbb{L}_\cU^\times, x \mapsto \exp(x):= \sum_{n \in \mathbb{N}} \frac{x^n}{n!}$ is well defined.  
\begin{theorem}
	The map $E$ coincides with the map $x \mapsto \exp(x)= \sum_{n \in \mathbb{N}} \frac{x^n}{n!}$ on $\cD_\cU$, i.e. $E(x)$ is given by the Taylor formula for the standard exponential map. 
	In other words, the embedding $\iota_\cU: \mathbb{K}_{\cU} \hookrightarrow \mathbb{L}_{\cU}$ commutes with the exponential, i.e. $\iota_\cU(E(x)) = \exp(\iota_\cU(x))$ for all $x \in \cD_\cU$. 
\end{theorem}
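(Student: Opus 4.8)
The plan is to show that the two exponential maps $E$ and $\exp$ agree on $\cD_\cU$ by reducing everything to a statement about convergence of a single power series, and then transporting that statement across the ultraproduct using \L o\'s Theorem. The key observation is that for $x \in \cD_\cU$, the element $\iota_\cU(x)$ lies in $\mathbb{L}_\cU^{\circ\circ}$ (indeed $\ordp(x_p) > \tfrac{1}{p-1} > 0$ for $\cU$-almost all $p$ forces $\mathrm{val}(x) > 0$, hence $v(\iota_\cU(x)) > 0$ since $\iota_\cU$ is value-preserving), so that $\exp(\iota_\cU(x)) = \sum_{n \in \mathbb{N}} \frac{\iota_\cU(x)^n}{n!}$ is well-defined by the Neumann Lemma argument already recalled in the excerpt. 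So the content of the theorem is that this generalized-power-series sum computes the same element of $\mathbb{K}_\cU^\times \subset \mathbb{L}_\cU^\times$ as $E(x) = [(\exp_p(x_p))_p]$.

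\textbf{First} I would set up the partial sums. For each $p$ and each $N \in \mathbb{N}$, write $s_{N,p} := \sum_{n=0}^{N} \frac{x_p^n}{n!} \in \mathbb{C}_p$, and let $s_N := [(s_{N,p})_p] \in \mathbb{K}_\cU$. Since each $s_N$ is a \emph{finite} sum, \L o\'s Theorem (applied to the field operations, which are in the language $L$) gives $\iota_\cU(s_N) = \sum_{n=0}^N \frac{\iota_\cU(x)^n}{n!}$ in $\mathbb{L}_\cU$; that is, $\iota_\cU$ commutes with finite sums and products trivially. The two things I then need to compare are: (i) $E(x) = \lim_{N} s_N$ in the $p$-adic sense on each coordinate, aggregated through $\cU$; and (ii) $\exp(\iota_\cU(x)) = \sum_{n} \frac{\iota_\cU(x)^n}{n!}$, the Hahn-series sum, which is the unique element whose truncation at each $\gamma \in \Gamma$ agrees with a cofinal tail of the partial sums. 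The plan is to show both are characterized by the same estimate: $v\big(\exp(\iota_\cU(x)) - \iota_\cU(s_N)\big) \to \infty$ as $N \to \infty$, and $\mathrm{val}\big(E(x) - s_N\big) \to \infty$ likewise, with the same rate.

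\textbf{The key estimate} is the standard $p$-adic tail bound for the exponential: for $x_p$ with $\ordp(x_p) > \tfrac{1}{p-1}$ one has $\ordp\!\big(\exp_p(x_p) - s_{N,p}\big) \ge (N+1)\big(\ordp(x_p) - \tfrac{1}{p-1}\big) + \tfrac{1}{p-1}$ (or any clean variant — the point is a bound linear in $N$ with a coefficient that is a positive element of the value group, uniformly across the $\cU$-almost-all primes where $x \in \cD_\cU$). Aggregating this bound coordinatewise and pushing it through $\cU$ gives $\mathrm{val}\big(E(x) - s_N\big) \ge (N+1)\cdot\big(\mathrm{val}(x) - \epsilon\big) + \epsilon$ in $\Gamma$, where $\epsilon$ is the infinitesimal with $\epsilon_p = \tfrac{1}{p-1}$, and $\mathrm{val}(x) - \epsilon > 0$. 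Hence $\mathrm{val}(E(x) - s_N) \to \infty$ in $\Gamma$. Applying $\iota_\cU$ (value-preserving) and using that $\iota_\cU(s_N) = \sum_{n\le N}\iota_\cU(x)^n/n!$, we get $v\big(\iota_\cU(E(x)) - \sum_{n\le N}\iota_\cU(x)^n/n!\big) \to \infty$. But this is precisely the defining property of the Hahn-series sum $\exp(\iota_\cU(x)) = \sum_n \iota_\cU(x)^n/n!$: an element $y \in \mathbb{L}_\cU$ equals this sum iff $v(y - \sum_{n \le N} \iota_\cU(x)^n/n!)$ exceeds every fixed element of $\Gamma$ for $N$ large, and here "exceeds every element of $\Gamma$" is what $\to \infty$ in $\Gamma$ means for this specific divergent-in-$\Gamma$ sequence (one should check $(N+1)(\mathrm{val}(x)-\epsilon)$ is cofinal in $\Gamma$, or rather cofinal past any given $\gamma$ — this follows since $\mathbb{N}$ acts and $\mathrm{val}(x)-\epsilon$ is a fixed positive element, so the multiples eventually exceed any $\gamma$ whose archimedean class is $\preceq$ that of $\mathrm{val}(x)-\epsilon$; a small additional argument handles $\gamma$ in larger archimedean classes, but for summability one only needs to beat each \emph{fixed} $\gamma$, and genuine summability of $\sum x^n/n!$ on $\mathbb{L}_\cU^{\circ\circ}$ was already granted). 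By uniqueness of sums in Hahn series fields, $\iota_\cU(E(x)) = \exp(\iota_\cU(x))$, which is the assertion.

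\textbf{The main obstacle} I anticipate is the matching of the two notions of convergence — $p$-adic coordinatewise limits filtered through $\cU$ on one side, and Hahn-series summation with its well-ordered support on the other — since these are a priori different completions and the embedding $\iota_\cU$ is explicitly \emph{not} assumed to be continuous or to respect limits in general. The resolution is exactly that I never invoke continuity of $\iota_\cU$: I only use that it is value-preserving and commutes with the \emph{ring} operations, hence with each \emph{finite} partial sum $s_N$, and then I compare the two errors $E(x) - s_N$ and $\exp(\iota_\cU(x)) - \iota_\cU(s_N)$ purely through valuation estimates in $\Gamma$. A secondary technical point worth care is the transfer of the $p$-adic inequality $\ordp(\exp_p(x_p) - s_{N,p}) \ge \cdots$ through \L o\'s Theorem: for each \emph{fixed} $N$ this is a first-order statement in $\cL_\pi$ about the valuation (once one fixes the linear-in-$N$ bound with rational-in-$p$ coefficients — these are encoded by the infinitesimal $\epsilon$), so it transfers, and one then lets $N$ range over the external parameter $\mathbb{N}$ outside the logic.
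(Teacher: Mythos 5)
There is a genuine gap at the final step, and it sits exactly where you wave it away with ``a small additional argument handles $\gamma$ in larger archimedean classes.'' Your estimate gives $v\bigl(\iota_\cU(E(x)) - \iota_\cU(s_N)\bigr) \geq (N+1)\,\mathrm{val}(x) - c_N$ and likewise $v\bigl(\exp(\iota_\cU(x)) - \iota_\cU(s_N)\bigr) = (N+1)\,v(\iota_\cU(x))$, whence by the ultrametric inequality $v\bigl(\iota_\cU(E(x)) - \exp(\iota_\cU(x))\bigr) \geq (N+1)\,v(\iota_\cU(x))$ for every $N$. But the value group $\Gamma = \prod_p \mathbb{Q}/\cU$ of $\mathbb{L}_\cU$ is non-archimedean: there are plenty of nonzero elements whose valuation exceeds every integer multiple of $v(\iota_\cU(x))$ (any element in a strictly larger archimedean class). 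So ``beating $(N+1)v(x)$ for all $N$'' does not force the difference to vanish, and your appeal to ``uniqueness of sums in Hahn series fields'' does not apply, because you have not shown that $\iota_\cU(E(x))$ \emph{is} the Neumann sum --- you have only shown it agrees with the partial sums to within errors of valuation $(N+1)v(x)$, which is not a characterization of the sum in $\mathbb{L}_\cU$. (Note also that the Neumann sum itself does not satisfy $v(\exp(y)-s_N(y)) \to \infty$ cofinally in $\Gamma$; summability in the sense of Neumann's Lemma is a formal, coefficientwise statement, not topological convergence of the partial sums in the full Hahn field, so your identification of the two notions of ``$\to\infty$'' is where the argument breaks.)

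The paper closes exactly this gap with machinery you do not have. It restricts attention to $F_0 = k_\cU(\alpha)$ and $F = k_\cU(\alpha, E(\alpha))$, uses \L o\'s' Theorem to show the partial sums form a pseudo-Cauchy sequence with pseudo-limit $E(\alpha)$, and then invokes Kaplansky's Theorems 2 and 3 to conclude that $F_0 \hookrightarrow F$ is an \emph{immediate} extension with value group contained in $\Gamma_0 = \mathbb{Q}\cdot\mathrm{val}(\alpha)$. This places both $\iota_\cU(E(\alpha))$ and all the partial sums inside the rank-one field $k_\cU((t^{\Gamma_0}))$, which is Hausdorff and complete and in which $N\cdot\mathrm{val}(\alpha)$ \emph{is} cofinal; only there does ``both elements are limits of the same Cauchy sequence'' yield equality. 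To repair your proof you would need some substitute for this step --- some a priori control on the support (equivalently, on the archimedean class of $v(\iota_\cU(E(x)) - \exp(\iota_\cU(x)))$) of the element $\iota_\cU(E(x))$ --- and nothing in your valuation estimates supplies it.
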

\begin{proof}
  Let $\alpha \in \cD_\cU$. Then $\alpha=[(\alpha_p)_p]$, and $\ordp(\alpha_p) > \frac{1}{p-1}$ for $\cU$-almost all $p$. Consider the fields $F_0= k_{\cU}(\alpha)$ and $F= k_\cU(\alpha, E(\alpha))$ and let $\Gamma_0$ be the divisible hull of the group val$(F^{\times})$. By (~\cite{AP} Theorem 3.4.3) 
   $\Gamma_0$ is an ordered abelian group having finite dimension as a linear space over $\mathbb{Q}$. This will be shown directly below. \\ 
  From the embedding $F  \hookrightarrow \mathbb{L}_{\cU}$ we get an embedding of valued fields $F \hookrightarrow k_\cU((t^{\Gamma_0}))$ such that the following diagram commutes
  \begin{equation*}
  \xymatrix{
  	F  \ar[d] \ar[r] & \mathbb{L}_{\cU} \\
  	k_\cU((t^{\Gamma_0}))  \ar[ur]  &
  }
  \end{equation*}
  obtained by identifying $F$ with its image in $\mathbb{L}_{\cU}$.   \\ 
  Let $s_N(\alpha)$ be the partial sum $s_N(\alpha):= \sum_{n=0}^N \frac{\alpha^n}{n!}$. 
  For every $p$, the sequence $(s_N(\alpha_p))_N$ is a Cauchy sequence in $\mathbb{C}_p$, hence in particular it is pseudo-Cauchy, and $\exp_p(\alpha_p)$ is a (pseudo-)limit. 
   Thus we obtain using \L \'os Theorem that for all positive integers $N_1 < N_2 <N_3$ 
   $$ \mathbb{K}_{\cU} \models \textrm{val}(s_{N_3}(\alpha) -s_{N_2}(\alpha)) > \textrm{val}(s_{N_2}(\alpha) -s_{N_1}(\alpha)), $$
   (where we used the abbreviation val defined in section \ref{lang})
   and $(s_N(\alpha))_N$ is pseudo-Cauchy in $\mathbb{K}_\cU$. In particular,  $(s_N(\alpha))_N$ is pseudo-Cauchy in $F_0$. 
   Also, we have 
    $$ \mathbb{K}_{\cU} \models \textrm{val}(E(\alpha) -s_{N}(\alpha)) = \textrm{val}(s_{M}(\alpha) -s_N(\alpha)), $$
    (by \L \'os Theorem) for all sufficiently large $N$, and all  $M>N$. \\
   It follows by ~\cite{K} (Theorems 2 and 3) that the extension $F_0 
   \hookrightarrow F_0(E(\alpha)) = F$  
   is an immediate valued field extension, and $\Gamma_0 = \mathbb{Q} \cdot \gamma$ where $\gamma= \textrm{val}(\alpha)$, hence $\Gamma_0$ has finite dimension as a linear space over $\mathbb{Q}$, as claimed. It follows in particular that the field $k_\cU((t^{\Gamma_0}))$ is Hausdorff and complete with respect to the topology induced by the valuation $v_{|k_\cU((t^{\Gamma_0}))}$. \\ 
   For any $x \in \cD_\cU$, one has 
   $$ \mathbb{C}_p \models \ordp(\exp_p(x_p) -s_N(x_p)) = \ordp\bigg(\frac{x_p^{N+1}}{(N+1)!}\bigg) $$
   for all $N$, for $\cU$-almost all $p$. Hence,  
   in this case we have:
   $$ \mathbb{K}_\cU \models \textrm{val}(E(\alpha) - s_N(\alpha)) = \textrm{val}\bigg(\frac{\alpha^{N+1}}{(N+1)!}\bigg) $$
   for all $N$.
    
   From the above observation we see that the sequence val$(\alpha^N)$ is cofinal in $\Gamma_0$, hence $\alpha^N \to 0$ as $N \to \infty$ (in $F$) and the sequence $E(\alpha)-s_N(\alpha)$ converges to zero in $k_{\cU}((t^{\Gamma_0}))$. Also, in $k_\cU((t^{\Gamma_0}))$ we have that $s_N(\alpha) \rightarrow \exp(\alpha)$. Consequently, $E(\alpha)=\exp(\alpha)$ as required.   
\end{proof}
 Using the above, we reach the following corollary: 
 \begin{cor}
 	For all $x \in \cD_\cU$, one has: $D(E(x))= E(x)Dx$. 
 \end{cor}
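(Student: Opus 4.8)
The plan is to reduce everything to the theorem just proved, which identifies $\iota_\cU(E(x))$ with the Taylor-series exponential $\exp(\iota_\cU(x))$ on $\cD_\cU$, and then to use that $D$ was \emph{constructed} as a series derivation on $\mathbb{L}_\cU$, so that it may be applied term by term to the power series defining $\exp$. Throughout one identifies $\mathbb{K}_\cU$ with its image $\iota_\cU(\mathbb{K}_\cU)\subset\mathbb{L}_\cU$, so that $D(E(x))$ and $Dx$ make sense as elements computed inside $\mathbb{L}_\cU$.

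First I would record that $\cD_\cU$ lands inside the maximal ideal. If $x=[(x_p)_p]\in\cD_\cU$, then $\ordp(x_p)>\tfrac{1}{p-1}>0$ for $\cU$-almost all $p$, so by \L o\'s' Theorem $\textrm{val}(x)=[(\ordp(x_p))_p]>[(\tfrac{1}{p-1})_p]>0$ in $\Gamma$; since $\iota_\cU$ is value-preserving, $\iota_\cU(x)\in k_{\cU}((t^{\Gamma^{>0}}))=\mathbb{L}_{\cU}^{\circ\circ}$. Hence, by Neumann's Lemma (\cite{N}), $\exp(\iota_\cU(x))=\sum_{n}\iota_\cU(x)^n/n!$ is a well-defined element of $\mathbb{L}_\cU^\times$, and by the preceding theorem it equals $\iota_\cU(E(x))$.

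Next, set $y:=\iota_\cU(x)\in\mathbb{L}_{\cU}^{\circ\circ}$ and differentiate the series for $\exp y$ term by term. Since each $n!\in\mathbb{Q}\subseteq k_\cU$ lies in the field of constants of $D$, the Leibniz rule gives $D(y^0)=0$ and $D(y^n/n!)=y^{n-1}(Dy)/(n-1)!$ for $n\geq 1$; the family $\big(y^{n-1}(Dy)/(n-1)!\big)_{n\geq 1}$ is summable, being the summable family $\big(y^{n-1}/(n-1)!\big)_n$ (again Neumann) multiplied by the single fixed element $Dy$, which shifts every support uniformly by $v(Dy)$ and hence preserves well-orderedness. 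As $D$ is a series derivation it is additive over summable families, so
$$ D(\exp y)\;=\;\sum_{n\geq 1}D\!\left(\frac{y^n}{n!}\right)\;=\;\left(\sum_{m\geq 0}\frac{y^m}{m!}\right)Dy\;=\;(\exp y)\,Dy, $$
which is exactly the relation $Dy=D(\exp y)/\exp y$ already noted via \cite{H}, Corollary (3.9). Substituting $\exp y=\iota_\cU(E(x))$ and $y=\iota_\cU(x)$, and reading the equality back in $\mathbb{K}_\cU$ under the identification, yields $D(E(x))=E(x)\,Dx$.

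The only step requiring care is this interchange of $D$ with an infinite sum, and it is legitimate precisely because $D$ was built as a series derivation on $\mathbb{L}_\cU=k_\cU((t^\Gamma))$ ("Case 1" of Example (6) in \cite{KMS}, via \cite{H}); the one point to check by hand is that the differentiated series is still summable, which follows from Neumann's Lemma together with stability of summability under multiplication by a fixed element of $\mathbb{L}_\cU$. Everything else is the Leibniz rule and the inclusion $\mathbb{Q}\subseteq k_\cU$.
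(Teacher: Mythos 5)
Your proposal is correct and follows essentially the same route as the paper: both reduce the corollary to the theorem identifying $E(x)$ with $\exp(\iota_\cU(x))=\sum_n \iota_\cU(x)^n/n!$ on $\cD_\cU$ and then invoke the identity $D(\exp y)=(\exp y)Dy$ for the series derivation $D$. The only difference is that you unpack the term-by-term differentiation and the summability check that the paper delegates to the citation of van der Hoeven's Corollary (3.9) in subsection on the exponential differential field; this is a harmless (and arguably welcome) elaboration, not a different argument.
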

\begin{proof}
	This follows from $E(x) = \sum_{n \in \mathbb{N}} \frac{x^n}{n!}= \exp(x)$ and that $Dx= \frac{D(\exp(x))}{\exp(x)}$ as observed in subsection \ref{expdif}.
\end{proof}
\subsection{Proof of Theorem \ref{main}}  \label{Proofs}
 In this section we apply the above considerations in order to obtain Theorem \ref{main}.   \\
Let $V$ be a variety of dimension $n$ in the affine $2n$-space $\mathbb{A}^{2n}_{\mathbb{Q}}$. For each prime $p$, denote by $W(\mathbb{C}_p)$ the set of tuples $\bar{a}_p \in E_p^n$ for which $(\bar{a}_p, \exp_p(\bar{a}_p)) \in V(\mathbb{C}_p)$.  \\
Let $S' \subset \mathbb{P}$ denote the set of primes $p$ for which $V$ has a $\mathbb{C}_p$-point of the form $(\bar{a}_p, \exp_p(\bar{a}_p))$ (hence, in particular, $\abs{\bar{a}_p}_p < p^{-1/p-1}$).
The following will be assumed throughout:  \\
$(\star)$ There are infinitely many primes $p$ such that  $V$ has a $\mathbb{C}_p$-point of the form $(\bar{a}_p, \exp_p(\bar{a}_p))$. In other words, $S'$ is an infinite set. \\
The proof of Theorem \ref{main} proceeds by assuming the contrapositive. More precisely, let $(\dagger)$ denote the following statement:  \\
$(\dagger)$  There exists no {\it uniform} rational linear dependence that holds for $\bar{a}_p$, for infinitely many $p \in S'$.  \\ 
      Let $\cU$ be a non-principal ultrafilter on $\mathbb{P}$, such that $S' \in \cU$. 
	   Define $\bar{x} =(x_1, \dots, x_n) = [(\bar{x}_p)_p] \in \mathbb{K}_\cU^n \subset \mathbb{L}_\cU^n$ as follows: \\
	   If $p \in S'$, then $\bar{x}_p = \bar{a}_p$. Otherwise, we let $\bar{x}_p$ be an arbitrary $n$-tuple of complex $p$-adic numbers which lie in the domain of the corresponding $\exp_p$ (this last assumption, though harmless, is not strictly necessary). 
	   Then $\bar{x}$ is  an $n$-tuple of elements of $\cD_\cU$. \\
	   Applying \L \'os Theorem to $\mathbb{K}_\cU$ it follows that $(\bar{x}, \textrm{Exp}(\bar{x})) \in V(\mathbb{K}_\cU)$, consequently $(\bar{x}, E(\bar{x})) \in V(\mathbb{K}_\cU) \subset V(\mathbb{L}_\cU)$. Hence, applying Proposition \ref{ASP} to the (partial) exponential valued field $\mathbb{L}_\cU$, it follows that
$$ m_1 x_1 + \dots + m_n x_n \in k_{\cU}, $$
for some $m_1, \dots, m_n \in \mathbb{Q}$ (not all zero). Clearing denominators, we can assume that $m_1, \dots, m_n$ are integers. \\
 Furthermore, since the elements $x_1, \dots, x_n$ lie in the maximal ideal $\mathbb{L}_\cU^{\circ \circ }$, any $\mathbb{Z}$-linear combination of $x_1, \dots, x_n$ will necessarily be in $\mathbb{L}_\cU^{\circ \circ}$.
 Writing $x_i = [(x_{p,i})_p]$ for $i=1, \dots, n$, it follows $$m_1 x_{p,1} + \dots + m_n x_{p,n} =0, \qquad \qquad (*)$$ for $\cU$-almost all $p \in \mathbb{P}$, i.e., the set of primes $p$ for which $(*)$ holds belongs to the ultrafilter $\cU$. In particular, observing that the intersection of any two sets in $\cU$ is in $\cU$ (so it is an infinite set, $\cU$ being a non-principal ultrafilter), one has, for infinitely many primes $p$ 
 $$ m_1 a_{p,1} + \dots + m_n a_{p,n} =0,   \qquad \qquad \qquad \qquad (\ddagger)$$
 with fixed $m_1, \dots, m_n$ (not all zero), contradicting the hypothesis.  \\
  In particular it follows from the above argument that the set of rational $n$-tuples $(m_1, \dots, m_n)$ (up to a non-zero multiplicative constant) for which $(\ddagger)$ holds uniformly for infinitely many primes $p \in S'$ is finite. Let us denote this set by $A$.  \\
  In order to show the remaining statement of Theorem \ref{main}, let us define $\bar{\alpha}:= (\bar{\alpha}_p)_p$ (i.e. $\bar{\alpha}$ defines a family of $n$-tuples of complex $p$-adic numbers that belong to $W(\mathbb{C}_p)$ for infinitely many $p$'s) and denote by $S_{\bar{\alpha}}$ the set of primes for which $(\ddagger)$ does not hold for the tuple $\bar{\alpha}_p$ for any $(m_1, \dots, m_n) \in A$. 
  By the above reasoning, $S_{\bar{\alpha}}$ is a finite set. Assume, for the sake of contradiction, that there is no finite set $S$ for which $S_{\bar{\alpha}} \subset S$ for all $\bar{\alpha}$. Choose a countable subset of these  $\bar{\alpha}$ enumerated as $\bar{\alpha}_1, \bar{\alpha}_2, \dots$, such that $\bigcup_{i=1}^\infty S_{\bar{\alpha}_i} \subset \mathbb{P}$ is an infinite set. Without loss of generality the sets $S_{\bar{\alpha}_i}$ can be assumed disjoint. One can then construct an infinite family of tuples $(\bar{\beta}_p)_p$ which satisfy $(\dagger)$: set $\bar{\beta}_p = \bar{\alpha}_{i,p}$ for $p \in S_{\bar{\alpha}_i}$, and assign arbitrary values to $\bar{\beta}_p$ for $p \notin S_{\bar{\alpha}_i}, \forall i$. Repeating the above reasoning, we reach a contradiction. This proves the claim.   \\
   Combining the above results, we see that Theorem ~\ref{main} is now fully proved.

\end{document}